\newtheorem{theorem}{Theorem}                            
\newtheorem{proposition}{Proposition}            
\newtheorem{lemma}[theorem]{Lemma}
\newtheorem{problem}{Problem}
\newtheorem*{theorem_}{Theorem}{\bf}{\it}
\newtheorem*{theorem_A}{Theorem A}{\bf}{\it}
{\bf}{\it}
{\bf}{\it}
\newtheorem*{definition_}{Definition}{\bf}{\rm}
{\bf}{\rm}
\newcommand{\cro}{{\mbox {\sc cr}}}
\def\inst#1{$^{#1}$}
\begin{document}


\title{Improvement on the decay of crossing numbers~\thanks{The first and the second author were supported by
Program "Structuring the European Research Area",
Grant MRTN-CT-2004-511953 at R\'enyi Institute and by project 
CE-ITI (GACR P2020/12/G061) of the Czech Science Foundation.
The second author was also partially supported by GraDR EUROGIGA project GIG/11/E023 and by the grant SVV-2010-261313 (Discrete Methods and Algorithms).
The third author was supported by the Hungarian Research Fund grants OTKA T-038397, T-046246, K-60427, and K-83767. 
A preliminary version of this paper appeared in the proceedings of Graph Drawing 2007~\cite{CKT08}. 
}
}

\author{Jakub \v Cern\'y\inst{1}, 
        Jan Kyn\v{c}l\inst{2}, 
        G\'eza T\'oth\inst{3}
} 


\maketitle

\begin{center}
{\footnotesize
\inst{1} 
Department of Applied Mathematics, \\
Charles University, Faculty of Mathematics and Physics, \\
Malostransk\'e n\'am.~25, 118~ 00 Praha 1, Czech Republic; \\ 
\texttt{kuba@kam.mff.cuni.cz}
\\\ \\
\inst{2} 
Department of Applied Mathematics and Institute for Theoretical Computer Science, \\
Charles University, Faculty of Mathematics and Physics, \\
Malostransk\'e n\'am.~25, 118~ 00 Praha 1, Czech Republic; \\ 
\texttt{kyncl@kam.mff.cuni.cz}
\\\ \\
\inst{3}
R\'enyi Institute, Hungarian Academy of Sciences, 
Budapest; \\
\texttt{geza@renyi.hu}
}
\end{center}  

\begin{abstract}
We prove that the crossing number of a graph decays in a ``continuous
fashion''
in the following sense. For any $\varepsilon>0$ there is a $\delta>0$
such that for a sufficiently large $n$, every graph $G$ with $n$ vertices
and $m\ge n^{1+\varepsilon}$ edges,
has a subgraph $G'$ of at most $(1-\delta)m$ edges
and crossing number at least $(1-\varepsilon)\cro(G)$.
This generalizes the result of J. Fox and Cs. T\'oth.
\end{abstract}


\section{Introduction}

For any graph $G$, let $n(G)$ (resp. $m(G)$)
denote the number of its vertices (resp. edges).
If it is clear from the context, we simply write $n$ and $m$
instead of $n(G)$ and $m(G)$.
The crossing number $\cro(G)$ of a graph $G$ is the minimum number of
edge crossings over all drawings of $G$ in the plane.
In the optimal drawing of $G$, crossings are not necessarily distributed
uniformly
among the edges. Some edges could be more ``responsible''
for the crossing number than some other edges.
For any fixed $k$, it is not hard to construct a graph $G$ whose crossing
number is $k$, but $G$ has an edge $e$ such that $G\setminus e$ is planar.
Richter and Thomassen \cite{RT93} started to investigate the following
general problem.
We have a graph $G$, and we want to remove a given number of edges.
By {\em at least} how much does the crossing number decrease?
They conjectured that there is a constant $c$ such that every graph $G$
with $\cro(G)=k$
has an edge $e$ with $\cro(G - e)\ge k-c\sqrt{k}$.
They only proved that $G$ has an edge with
$\cro(G-e)\ge {2\over 5}k-O(1)$.

Pach, Radoi\v ci\'c, Tardos, and T\'oth \cite{PRTT06} proved that
for every graph $G$ with $m(G)\ge {103\over 16}n(G)$, we have
$\cro(G)\ge 0.032{m^3\over n^2}$.
It is not hard to see \cite{PT00} that for {\em any} edge $e$,
we have 
$\cro(G-e)\ge \cro(G)-m+1$. These two results imply an improvement
of the Richter--Thomassen bound if $m\ge 8.1n$, and also imply
the Richter--Thomassen
conjecture for graphs of $\Omega(n^2)$ edges.

J. Fox and Cs. T\'oth \cite{FT08}
investigated the case where we want to delete a {\em
positive fraction} of the edges.

\medskip

\begin{theorem_A}[\cite{FT08}]
For every $\varepsilon>0$, there is an $n_{\varepsilon}$ such that
every graph $G$ with $n(G)\ge n_{\varepsilon}$ vertices and
$m(G)\ge n(G)^{1+\varepsilon}$ edges has a subgraph $G'$
with
$$m(G')\le \left(1-{\varepsilon\over 24}\right)m(G)$$
and
$$\cro(G')\ge \left({1\over 28}-o(1)\right)\cro(G).$$
\end{theorem_A}

In this note we generalize Theorem A.

\begin{theorem_}
For every $\varepsilon, \gamma > 0$, there is an $n_{\varepsilon,\gamma}$
such that every graph $G$ with $n(G)\ge n_{\varepsilon,\gamma}$ vertices
and $m(G)\ge n(G)^{1+\varepsilon}$ edges has a subgraph $G'$ with 
$$m(G')\le \left(1-\frac{\varepsilon\gamma}{1224}\right)m(G)$$
and 
$$\cro(G')\ge (1-\gamma) \cro(G).$$
\end{theorem_}


\section{Proof of the Theorem}

Our proof is based on the argument of Fox and T\'oth \cite{FT08}, 
the only new ingredient is Lemma~\ref{lemma_rcycles}. 

\smallskip

\begin{definition_}
Let $r\ge 2, p\ge 1$ be integers.
A {\it $2r$-earring of size $p$} is a graph which is a union 
of an edge $uv$ and $p$ 
edge-disjoint
paths between $u$ and $v$, each of length at most $2r-1$. 
Edge $uv$ is called the
{\it main edge} of the $2r$-earring.
\end{definition_}


\begin{lemma}
\label{lemma_rcycles}
Let $r\ge 2, p\ge 1$ be integers.
There exists $n_0$ such that
every graph $G$ with $n\ge n_0$ vertices and $m \ge 6pr n^{1+{1/r}}$ edges
contains at least $m/3pr$ edge-disjoint $2r$-earrings, each of size $p$.
\end{lemma}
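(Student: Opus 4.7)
I would prove the lemma by a greedy iteration: as long as the current graph still has at least $2pr\,n^{1+1/r}$ edges, I locate one $2r$-earring of size $p$ and delete its at most $1+p(2r-1)<2pr$ edges. Since each round removes fewer than $2pr$ edges, the iteration lasts for at least $(m-2pr\,n^{1+1/r})/(2pr)\ge m/(3pr)$ rounds under the hypothesis $m\ge 6pr\,n^{1+1/r}$, producing the required collection of edge-disjoint earrings. The whole proof therefore reduces to the single-existence statement: every $n$-vertex graph $H$ with $m(H)\ge 2pr\,n^{1+1/r}$ edges contains at least one $2r$-earring of size $p$.

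To establish this single-existence statement I first pass to a subgraph $H'\subseteq H$ of minimum degree at least $\delta:=pr\,n^{1/r}$ by iteratively deleting every vertex of degree below $m(H)/(2n)\ge\delta$; the usual accounting shows that this destroys at most $m(H)/2$ edges, so $H'$ is still dense. Inside $H'$ it suffices to exhibit a single edge $uv$ admitting $p$ edge-disjoint $uv$-paths of length at most $2r-1$. My approach here is a BFS/extremal counting argument in the spirit of the Bondy--Simonovits even-cycle theorem: running breadth-first search from a vertex $u$ to depth $r-1$ and using the minimum-degree condition $\delta\ge pr\,n^{1/r}$ to bound the level sizes $|N_i(u)|$, one forces many edges of $H'$ to be ``back-edges'' which realize short cycles through $u$. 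Averaging this count over all starting vertices (equivalently, over all edges), some edge $uv$ is forced to lie on at least $p$ cycles of length at most $2r$.

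The main obstacle I foresee is that these $p$ short cycles through $uv$ may share edges, so I cannot extract the required $p$ edge-disjoint $uv$-paths for free. To close this gap I plan to apply Menger's theorem to the subgraph of $H'$ formed by all short $uv$-paths: it suffices to show that every $uv$ edge-cut inside this subgraph has size at least $p$, which I intend to verify by a local expansion argument contradicting the minimum-degree bound $\delta=pr\,n^{1/r}$ in the vicinity of $u$ or $v$. An alternative phrasing is an iterative peel: repeatedly remove one short $uv$-path and observe that after deleting $\le p(2r-1)$ edges, the BFS expansion argument still guarantees a further short $uv$-path, so the procedure survives $p$ iterations. Calibrating the constants precisely so that the BFS expansion survives these deletions is the delicate step, and is what pins down the threshold $2pr\,n^{1+1/r}$ and ultimately the constants $6$ and $3$ in the lemma statement.
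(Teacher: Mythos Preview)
Your reduction to a single-existence statement is sound: the outer greedy loop and the arithmetic $(m-2pr\,n^{1+1/r})/(2pr)\ge m/(3pr)$ are exactly what the paper does (the paper phrases it as ``take a maximal edge-disjoint family of size-$p$ earrings and show it covers at least $\tfrac{2}{3}m$ edges'', which is the same thing). The difficulty is entirely in the inner step---showing that any $n$-vertex graph with at least $2pr\,n^{1+1/r}$ edges contains one $2r$-earring of size~$p$---and here your sketch does not close.

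The specific gap is the passage from ``many short cycles'' to ``$p$ edge-disjoint short paths through a \emph{fixed} edge $uv$''. Your iterative peel assumes that after deleting one short $uv$-path the BFS argument still produces another short $uv$-path; but BFS expansion from high minimum degree only guarantees a short cycle \emph{somewhere}, not one through the prescribed edge. (Concretely, take two cliques of size $\delta+1$ joined by a bridge $uv$: the minimum degree is $\delta$, yet $uv$ lies on no cycle at all.) Your Menger alternative has the dual problem: a large $uv$-edge-cut yields many edge-disjoint $uv$-paths, but with no length control. Averaging the cycle count over edges likewise gives many short cycles through some edge, but not edge-disjoint ones. None of these three devices, as stated, pins down both ``edge-disjoint'' and ``short'' simultaneously.

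The paper sidesteps this difficulty with a neat trick that never fixes the edge $uv$ in advance. Working in the leftover graph $G'$ (which by maximality contains no size-$p$ earring), it greedily peels off $2r$-earrings of \emph{maximum} size, obtaining $E'_1,\dots,E'_y$ each of size $<p$, until no short cycle remains. The key observation is that the \emph{main edges} of $E'_1,\dots,E'_y$ cannot themselves contain a cycle of length $\le 2r$: if they did, take the smallest $i$ whose main edge lies on that cycle; the rest of the cycle is still present in $G'_{i-1}$ and is edge-disjoint from $E'_i$, so adjoining it would enlarge $E'_i$, contradicting the greedy choice. Hence by Alon--Hoory--Linial $y<n^{1+1/r}$, so the $E'_j$ together account for fewer than $(2pr-1)n^{1+1/r}$ edges; the residual graph has no cycle of length $\le 2r$ and hence, again by Alon--Hoory--Linial, fewer than $n^{1+1/r}$ edges. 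Thus a graph with no size-$p$ earring has fewer than $2pr\,n^{1+1/r}$ edges, which is precisely your single-existence statement. If you replace your BFS/Menger step by this two-level greedy argument, your proof goes through with the stated constants.
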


\begin{proof}
By the result of 
Alon, Hoory, and Linial \cite{AHL02},
for some $n_0$,
every graph with $n\geq n_0$ vertices and at least $n^{1+{1/r}}$ edges
contains a cycle of length at most $2r$. 

Suppose that $G$ has $n\ge n_0$ vertices and $m \ge 6pr n^{1+{1/r}}$
edges.
Take a {\em maximal} edge-disjoint set $\{E_1, E_2, \ldots , E_x\}$
of $2r$-earrings, each of size $p$. Let $E=E_1\cup E_2\cup \cdots \cup
E_x$,
the set of all edges of the earrings and let $G'=G - E$.
Now let $E'_1$ be a $2r$-earring of $G'$ of maximum size.
Note that this size is less than $p$.
Let $G'_1=G'- E'_1$. Similarly, let $E'_2$ be a $2r$-earring of
$G'_1$
of maximum size
and let $G'_2=G'_1- E'_2$. Continue analogously, as long as there
is
a $2r$-earring in the remaining graph. We obtain 
the $2r$-earrings $E'_1, E'_2, \ldots , E'_y$, and the remaining graph 
$G''=G'_y$ does not contain any $2r$-earring. Let
$E'=E'_1\cup E'_2\cup \cdots \cup E'_y$.

We claim that $y< n^{1+{1/r}}$. Suppose on the contrary that 
$y\ge n^{1+{1/r}}$. Take the main edges of $E'_1, E'_2, \ldots , E'_y$.
We have at least 
$n^{1+{1/r}}$ edges so by the result of 
Alon, Hoory, and Linial \cite{AHL02} some of them form a cycle $C$ of
length
at most $2r$. Let $i$ be the smallest index with the property that $C$
contains the main edge of $E'_i$. Then $C$, together with $E'_i$ 
would be a $2r$-earring of $G'_{i-1}$ of greater size than $E'_i$,
contradicting the maximality of $E'_i$.

Each of the earrings $E'_1, E'_2, \ldots , E'_y$
has at most $(p-1)(2r-1)+1$ edges so we have $|E'|\le y(p-1)(2r-1)+y<
(2pr-1)n^{1+{1/r}}$. 
The remaining graph, $G''$ does not contain any $2r$-earring, in
particular, 
it does not contain any cycle of length at most $2r$, since it is 
a $2r$-earring of size one. Therefore, by \cite{AHL02},
for the number of its edges we have $e(G'')< n^{1+{1/r}}$.

It follows that the set $E=\{E_1, E_2, \ldots , E_x\}$
contains at least $m-2prn^{1+{1/r}}$ $\ge {2\over 3}m$ edges.
Each of $E_1, E_2, \ldots , E_x$ has at most $p(2r-1)+1\le 2pr$ edges,
therefore, $x\ge m/3pr$. 
\end{proof}


\begin{lemma}[\cite{FT08}]
\label{lemma_FT}
Let $G$ be a graph with $n$ vertices, 
$m$ edges, 
and degree sequence
$d_1\le d_2\le\cdots\le d_n$.
Let $\ell$ be the integer such that
$\sum_{i=1}^{\ell-1}d_i<4m/3$ but
$\sum_{i=1}^{\ell}d_i\ge 4m/3$.
If $n$ is large enough and $m=\Omega(n\log^2n)$ then 
$$\cro(G)\ge{1\over 65}\sum_{i=1}^{\ell}d^2_i.$$
\end{lemma}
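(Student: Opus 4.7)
The plan is to apply a degree-sensitive crossing lemma to a carefully chosen low-degree induced subgraph.

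First, I would set $V_0 = \{v_1, v_2, \ldots, v_\ell\}$ and define $H = G[V_0]$. Counting how each edge of $G$ contributes to the degree sum of $V_0$, one obtains $2\,e(H) + e(V_0, V \setminus V_0) = \sum_{i=1}^\ell d_i \ge 4m/3$; combined with the trivial $e(V_0, V \setminus V_0) \le m - e(H)$, this yields $e(H) \ge m/3$. Moreover, every vertex in $V_0$ has $G$-degree at most $d_\ell$, so $\Delta(H) \le d_\ell$.

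Next, I would derive an inequality of the form $\cro(H) \ge c \sum_{i=1}^\ell d_i^2$ from a version of the crossing lemma that is sensitive to the degree sequence. The natural tool is the Pach--Shahrokhi--Szegedy bisection-width inequality, which bounds the bisection width of a graph in terms of $\cro + \sum_v d_v^2$ up to a $\sqrt{\log n}$ factor; iterating this inequality down to pieces of bounded size produces a crossing-number estimate proportional to the second moment of the degree sequence, provided the graph satisfies a suitable density hypothesis. The assumption $m = \Omega(n \log^2 n)$ is exactly what absorbs the logarithmic slack accumulated at each stage of the recursion. Since $H$ is a subgraph of $G$, this would give $\cro(G) \ge \cro(H) \ge (1/65)\sum_{i=1}^\ell d_i^2$.

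The hard part will be twofold. First, one has to track constants carefully through the bisection recursion to land at the specific value $1/65$, which essentially forces starting from a near-optimal crossing-lemma constant and being thrifty at every step. Second, the bisection argument naturally produces the $H$-degrees $d_v^H$, while the target sum uses the $G$-degrees $d_i$; in principle individual vertices of $V_0$ could send almost all of their edges out to $V \setminus V_0$, making $d_v^H$ much smaller than $d_i$. Although the aggregate bound $\sum_{v \in V_0} d_v^H = 2e(H) \ge 2m/3$ is already a constant fraction of $\sum_{i=1}^\ell d_i$, converting this comparison from sums to sums-of-squares requires a convexity or rearrangement argument, and I expect this step to occupy most of the technical work.
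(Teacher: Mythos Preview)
This lemma is not proved in the present paper; it is quoted from Fox and Cs.~T\'oth~\cite{FT08} and invoked as a black box in the proof of the main theorem. There is therefore no argument here against which to compare your proposal.

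That said, your outline is consistent with the strategy in~\cite{FT08}: restrict to the induced subgraph on $\{v_1,\dots,v_\ell\}$, observe that it retains at least $m/3$ edges, and run a bisection-width recursion of Pach--Spencer--T\'oth type, the hypothesis $m=\Omega(n\log^2 n)$ being precisely what swallows the accumulated logarithmic factors. The difficulty you flag about $H$-degrees versus $G$-degrees is genuine, but a termwise comparison of squares is not what is needed; the crude bound $\sum_{i\le\ell}d_i^2\le d_\ell\sum_{i\le\ell}d_i$, together with the control on $d_\ell$ and on $e(H)$ that the definition of $\ell$ provides, is enough to close the loop. Pinning down the constant $1/65$ is, as you anticipate, bookkeeping.
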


\medskip

\noindent {\it Proof of the Theorem.}
Let $\varepsilon,\gamma \in (0,1)$ be fixed. 
Choose integers $r,p$ such that 
$\frac{1}{\varepsilon}<r\le \frac{2}{\varepsilon}$, and
$\frac{67}{\gamma}<p\le \frac{68}{\gamma}$.
It follows that we have
$\frac{1}{r}<\varepsilon \le \frac{2}{r}$, 
and $\frac{67}{p}<\gamma \le \frac{68}{p}$.
Then there is an $n_{\varepsilon,\gamma}$ with the following properties: 
(a) $n_{\varepsilon,\gamma}\ge n_0$ from Lemma~\ref{lemma_rcycles},
(b) $(n_{\varepsilon,\gamma})^{1+\varepsilon} > 18pr \cdot
(n_{\varepsilon,\gamma})^{1+1/r}$.

Let $G$ be a graph with $n\ge n_{\varepsilon,\gamma}$ vertices and $m\ge
n^{1+\varepsilon}$ edges.
Let $v_1, \ldots , v_n$ be the vertices of $G$, of degrees
$d_1\le d_2\le\cdots\le d_n$ and define 
$\ell$ as in Lemma~\ref{lemma_FT}, that is,
$\sum_{i=1}^{\ell-1}d_i<4m/3$ but
$\sum_{i=1}^{\ell}d_i\ge 4m/3$.
Let $G_0$ be the subgraph of $G$ induced by $v_1, \ldots , v_{\ell}$.
Observe that $G_0$ has $m'\ge m/3$ edges. 
Therefore, by Lemma~\ref{lemma_rcycles} $G_0$ contains at least 
$m'/3pr\ge m/9pr$ edge-disjoint $2r$-earrings, each of size $p$.

Let $M$ be the set of 
the main edges of these $2r$-earrings. We have $|M|\ge m/9pr \ge
\frac{\varepsilon\gamma}{1224}m$.
Let $G'=G- M$ and $G_0'=G_0- M$.

Take an optimal drawing $D(G')$ of the subgraph
$G'\subset G$. We have to draw the missing edges to obtain a drawing of
$G$. 
Our method is a randomized variation of the embedding method, which has
been applied by
Leighton \cite{L83}, Richter and
Thomassen \cite{RT93}, Shahrokhi et al. \cite{SSSV97}, Sz\'ekely
\cite{S04}, 
and most recently by Fox and
T\'oth \cite{FT08}. 
For every missing edge $e_i=u_iv_i\in M\subset G_0$,
$e_i$ is the deleted main 
edge of a 
$2r$-earring $E_i\subset G_0$. So there are $p$ 
edge-disjoint paths in $G_0$ from $u_i$ to $v_i$.
For each of these paths, draw a curve from $u_i$ to $v_i$ 
infinitesimally close to that path, on either side. Call these $p$ curves 
{\em potential $u_iv_i$-edges} and 
call the resulting drawing $D$.
Note that a potential $u_iv_i$-edge crosses itself if the corresponding 
path does. In such cases, we redraw the potential $u_iv_i$-edge in the neighborhood of each self-crossing to get a noncrossing curve.

To get a drawing of $G$, for each $e_i=u_iv_i\in M$,
choose one of the $p$ potential $u_iv_i$-edges
at random, independently and uniformly, with probability
$1/p$, and draw the edge $u_iv_i$ as that curve. 

There are two types of new crossings in the obtained drawing of $G$.
First category crossings are
infinitesimally close to a crossing in $D(G')$, second category crossings
are
infinitesimally close to a vertex of $G_0$ in $D(G')$.

The expected number of first category crossings is at most
$$\left(1+{2\over p}+{1\over p^2}\right)\cro(G')=\left(1+{1\over
p}\right)^2\cro(G').$$
Indeed, for each edge of $G'$, there can be at most one new edge 
drawn next to it, and that is drawn with probability at most $1/p$.
Therefore, in the close neighborhood of a crossing 
in $D(G')$, the expected number of crossings is 
at most $(1+{2\over p}+{1\over p^2})$. See figure~\ref{fig:embedding}(a).

\begin{figure}[ht]
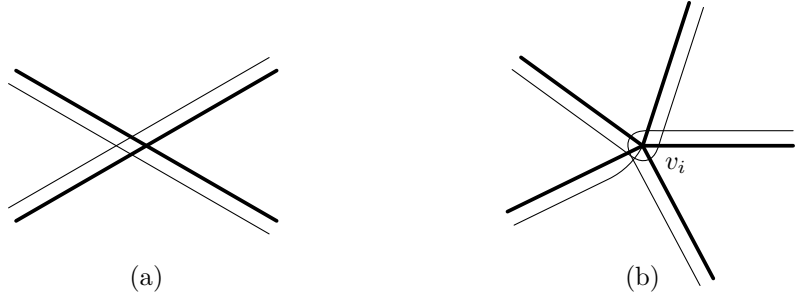

$$\epsfbox{figures.1}\hskip 3cm\epsfbox{figures.2}$$
\caption{The thick edges are edges of $G'$, the thin edges are the
potential
edges. Figure shows
(a) a neighborhood of a crossing in $D(G')$ and (b) a neighborhood of a
vertex $v_i$ in $G'$.}
\label{fig:embedding}
\end{figure}

In order to estimate the expected number of second category crossings, 
consider the drawing $D$ near a vertex $v_i$ of $G_0$.
In the neighborhood of vertex $v_i$ we 
have at most $d_i$ original edges. Since we draw at most one potential
edge
along 
each original edge,
there can be at most $d_i$ potential edges in the neighborhood. 
Each potential edge can cross each original edge at most once, and any two
potential edges can
cross at most twice. See figure~\ref{fig:embedding}(b). 
Therefore, the total number of first category crossings in $D$ 
in the neighborhood of $v_i$ is at most $2d_i^2$.
(This bound can be substantially improved with a more careful argument, 
see e. g. \cite{FT08}, but we do not need anything better here.)
To obtain the drawing of $G$, we keep each of the potential edges with
probability
$1/p$,
so the expected number of crossings in the neighborhood of $v_i$ is 
at most $({1\over p}+{1\over p^2})d_i^2$, using the fact that the self-crossings 
of the potential $uv$-edges have been eliminated.

Therefore, the total expected number of crossings in the random drawing
of $G$ is at most
$(1+{2\over p}+{1\over p^2})\cro(G')+({1\over p}+{1\over p^2})\sum_{i=1}^{\ell}d_i^2$.

There exists an embedding with at most this many crossings, therefore,
by Lemma~\ref{lemma_FT} we have

\begin{align*}
\cro(G) &\le \left(1+{1\over p}\right)^2 \cro(G') 
+ \left({1\over p}+{1\over p^2}\right) \sum_{i=1}^{\ell}d_i^2 \\
&\le\left(1+{1\over p}\right)^2 \cro(G') + \left({65\over p}+{65\over p^2}\right)\cro(G).
\end{align*}

It follows that

$$\left(1-{65\over p}-{65\over p^2}\right)\cro(G)\le \left(1+{1\over p}\right)^2 \cro(G'),$$

so

\begin{align*}
\left(1-{65\over p}-{65\over p^2}\right)\left(1-{1\over p}\right)^2\cro(G) &\le \left(1-{1\over p^2}\right)^2 \cro(G'), \\
\left(1-{65\over p}-{65\over p^2}\right)\left(1-{2\over p}\right)\cro(G) &\le \cro(G'), \\
\left(1-{67\over p}\right)\cro(G) &\le \cro(G'),
\end{align*}

consequently, 
$$\left(1-\gamma\right)\cro(G) \le \cro(G').$$
\qed


\section{Concluding remarks}

In the statement of our Theorem we cannot 
require that {\em every} subgraph $G'$ with 
$(1-\delta)m(G)$ edges has crossing number
$\cro(G')\ge (1-\gamma)\cro(G)$, instead of just {\em one} such subgraph
$G'$.
In fact, the following statement holds.

\begin{proposition}\label{prop_1}
For every $\varepsilon \in (0,1)$ there exist graphs $G_n$ with
$n(G_n)=\Theta(n)$
vertices and $m(G_n)=\Theta(n^{1+\varepsilon})$ edges with subgraphs
$G'_n\subset G_n$
such that
$$m(G'_n)=\left(1-o(1)\right)m(G_n)$$ 
and 
$$\cro(G'_n)=o(\cro(G_n)).$$ 
\end{proposition}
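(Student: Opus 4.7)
The plan is to construct $G_n$ as a disjoint union of two pieces with very different crossing-number profiles: a large piece $H$ with many edges whose crossing number matches the Crossing Lemma lower bound for its edge count, and a tiny but extremely crossing-heavy complete graph $K_s$. The subgraph $G'_n$ will then be obtained by deleting all edges of $K_s$; these form only a negligible fraction of the edges of $G_n$, yet they carry the dominant part of $\cro(G_n)$.

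Concretely, choose a real number $\beta$ with $(1+3\varepsilon)/4 < \beta < (1+\varepsilon)/2$; such $\beta$ exists precisely because $\varepsilon<1$. Set $k=\lceil n^{\varepsilon}\rceil$, $s=\lfloor n^{\beta}\rfloor$, and let $H$ be the disjoint union of $\lfloor n/k\rfloor$ vertex-disjoint copies of the complete graph $K_k$. Define $G_n$ to be the disjoint union $H\cup K_s$, and set $G'_n=H$. Then $n(G_n)=n+O(n^{\beta})=\Theta(n)$, and $m(G_n)=\Theta(nk)+\Theta(s^{2})=\Theta(n^{1+\varepsilon})$, the $K_s$-term being of lower order since $2\beta<1+\varepsilon$; in particular $m(G'_n)=m(G_n)-\binom{s}{2}=(1-o(1))m(G_n)$.

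For the crossing numbers I would use two standard facts: that $\cro$ is additive over disjoint components (any drawing of $G_1\cup G_2$ restricts to drawings of $G_1$ and $G_2$, and conversely one may draw them in disjoint regions of the plane), and that $\cro(K_t)=\Theta(t^{4})$. These give $\cro(H)=\lfloor n/k\rfloor\cdot\Theta(k^{4})=\Theta(nk^{3})=\Theta(n^{1+3\varepsilon})$, which is of the order $m(H)^{3}/n(H)^{2}$ and hence tight for the Crossing Lemma, while $\cro(K_s)=\Theta(s^{4})=\Theta(n^{4\beta})$. Because $4\beta>1+3\varepsilon$, the $K_s$-term dominates, so $\cro(G_n)=\Theta(n^{4\beta})$ whereas $\cro(G'_n)=\cro(H)=\Theta(n^{1+3\varepsilon})=o(n^{4\beta})=o(\cro(G_n))$, as required.

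There is no real obstacle in the argument; essentially everything is a matter of choosing the exponent $\beta$ correctly. The only point to check is that the two constraints on $\beta$ are simultaneously satisfiable: the upper bound $2\beta<1+\varepsilon$ is what makes the $K_s$-edges a vanishing fraction, while the lower bound $4\beta>1+3\varepsilon$ is what makes the $K_s$-crossings overwhelm the $H$-crossings. The interval $((1+3\varepsilon)/4,(1+\varepsilon)/2)$ is non-empty exactly when $\varepsilon<1$, which is the hypothesis of the proposition.
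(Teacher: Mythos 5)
Your construction is correct and is essentially the same as the paper's: a disjoint union of $\Theta(n^{1-\varepsilon})$ cliques on $\lceil n^{\varepsilon}\rceil$ vertices plus one clique on $\Theta(n^{\beta})$ vertices, where the paper simply fixes $\beta=(3+5\varepsilon)/8$, the midpoint of your admissible interval $\left((1+3\varepsilon)/4,(1+\varepsilon)/2\right)$. The verification (edge counts, $\cro(K_t)=\Theta(t^4)$ via the Crossing Lemma, additivity over components) matches the paper's argument.
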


\begin{proof}
Roughly speaking, $G_n$ will be the disjoint union of a large graph $G'_n$
with low
crossing number and a small graph $H_n$
with large crossing number. More precisely, 
let $G=G_n$ be a disjoint union of graphs $G'=G'_n$ and $H=H_n$, 
where $G'$ is a disjoint union
of $\Theta(n^{1-\varepsilon})$ complete graphs, each with
$\lfloor n^{\varepsilon} \rfloor$
vertices and $H$ is a complete graph with $\lfloor n^{(3+5\varepsilon)/8} \rfloor$
vertices. We have $m(G)=\Theta(n^{1+\varepsilon})$ and
$m(H)=\Theta(n^{(3+5\varepsilon)/4})=o(m(G))$, since
$\frac{3+5\varepsilon}{4}
<1+\varepsilon$. By the crossing lemma (see e. g. \cite{PRTT06}), 
$\cro(G)\ge \cro(H) =
\Omega(n^{(3+5\varepsilon)/2})$, but $\cro(G')= O(n^{1-\varepsilon} \cdot
n^{4\varepsilon})=O(n^{1+3\varepsilon})=o(\cro(G))$, because
$\frac{3+5\varepsilon}{2} > 1+3\varepsilon$.
\end{proof}

In the preliminary version of this paper~\cite{CKT08} we conjectured that
we can require that a positive fraction of all subgraphs 
$G'$ of $G$ with 
$(1-\delta)m(G)$ edges has crossing number
$\cro(G')\ge (1-\gamma)\cro(G)$. The following construction shows that the conjecture does not hold in general for graphs with less than $n^{4/3-\Omega(1)}$ edges.

\begin{proposition}\label{prop_4/3}
For every $\varepsilon \in (0,1/3)$ and $\delta > 0$ there exist graphs $G_n$ with
$n(G_n)=\Theta(n)$
vertices and $m(G_n)=\Theta(n^{1+\varepsilon})$ edges with the following property.
Let $G'_n$ be a random subgraph 
of $G_n$ such that we choose each edge of $G_n$ independently with probability
$p=1-\delta$. Then 

$${\mbox {\rm Pr}}\left[\cro(G'_n)\le o(\cro(G_n))\right]>1-e^{-\delta n^{\Omega(1/3-\varepsilon)}}.$$
\end{proposition}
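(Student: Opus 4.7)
The plan is to modify the construction of Proposition~\ref{prop_1} by making the crossing-heavy part \emph{fragile}: random deletion of even a constant fraction of its edges should, with very high probability, destroy almost all of its crossings. The natural way to achieve this is to replace every edge of a dense graph by a long path.

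Concretely, I take $G_n=A_n\cup B_n$ to be a disjoint union. Here $A_n$ is the same graph as in Proposition~\ref{prop_1}: a disjoint union of $\Theta(n^{1-\varepsilon})$ copies of $K_{\lfloor n^{\varepsilon}\rfloor}$, with $\Theta(n)$ vertices, $\Theta(n^{1+\varepsilon})$ edges and $\cro(A_n)=O(n^{1+3\varepsilon})$. The graph $B_n$ is obtained from $K_k$ on $k=\lfloor n^{1/3+\varepsilon/2}\rfloor$ vertices by replacing every edge with an internally disjoint path of length $t=\lceil C n^{1/3-\varepsilon}\rceil$, for a sufficiently large constant $C=C(\delta)$. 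Then $v(B_n)=\Theta(tk^2)=\Theta(n)$ and $e(B_n)=t\binom{k}{2}=\Theta(n)$, so $v(G_n)=\Theta(n)$ and $e(G_n)=\Theta(n^{1+\varepsilon})$. Since subdivision preserves the crossing number, $\cro(B_n)=\cro(K_k)=\Theta(k^4)=\Theta(n^{4/3+2\varepsilon})$; as $\varepsilon<1/3$ this dominates $\cro(A_n)$, so $\cro(G_n)=\Theta(n^{4/3+2\varepsilon})$.

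For the random subgraph $G'_n$, the bound $\cro(G'_n\cap A_n)\le\cro(A_n)=O(n^{1+3\varepsilon})$ is automatic. For $B_n$ the key observation is that its $\binom{k}{2}$ subdivided paths are pairwise edge-disjoint, so the events ``path $P$ keeps all its $t$ edges'' are mutually independent, each with probability $(1-\delta)^t$. A union bound therefore gives
$$\Pr[\,\text{some path of }B_n\text{ survives intact}\,]\;\le\;\binom{k}{2}(1-\delta)^t\;\le\;k^2\,e^{-\delta t}\;\le\;e^{-\Omega(\delta n^{1/3-\varepsilon})},$$
once $C$ is large enough to absorb the $2\log k$ term. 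When every path loses at least one edge, the remaining subgraph of $B_n$ has no cycle (any cycle would require an intact subdivided path connecting two original $K_k$-vertices), so it is a forest and hence planar. Therefore $\cro(G'_n\cap B_n)=0$, and consequently $\cro(G'_n)=O(n^{1+3\varepsilon})=o(\cro(G_n))$ with probability at least $1-e^{-\Omega(\delta n^{1/3-\varepsilon})}$.

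The main obstacle is the simultaneous satisfaction of three competing constraints on $(k,t)$: the vertex budget $tk^2=O(n)$, the dominance requirement $k^4\gg n^{1+3\varepsilon}$, and the concentration requirement $\delta t\gg\log k$. Eliminating $t$ via $t\asymp n/k^2$ reduces the first two to $k^2\ll n$ and $k^4\gg n^{1+3\varepsilon}$; these admit a common solution precisely when $(1+3\varepsilon)/4<1/2$, i.e.\ when $\varepsilon<1/3$, exactly the hypothesis of the proposition. The concentration requirement then follows automatically, since $n^{1/3-\varepsilon}$ grows faster than $\log n$.
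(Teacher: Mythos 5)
Your proof is correct. The overall strategy is the same as the paper's: take a disjoint union of the low-crossing graph from Proposition~\ref{prop_1} with a ``fragile'' high-crossing component whose crossings all live on long edge-disjoint paths, then apply a union bound to show that with probability $1-e^{-\Omega(\delta n^{1/3-\varepsilon})}$ every such path loses an edge, leaving a forest. Where you genuinely differ is in the fragile component. The paper takes five main vertices joined by $n^{1-2\gamma}$ internally disjoint paths of length $n^{\gamma}$ between each pair (a thickened, subdivided $K_5$), and must then prove $\cro(H)=n^{2-4\gamma}$ by a redrawing argument: route all parallel paths along the one with fewest crossings, observe the resulting $K_5$ subdivision forces one crossing pair, and conclude that all $n^{2-4\gamma}$ pairs cross. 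You instead subdivide a single large $K_k$ with $k=\lfloor n^{1/3+\varepsilon/2}\rfloor$, so the crossing number $\Theta(k^4)=\Theta(n^{4/3+2\varepsilon})$ comes for free from subdivision-invariance and the standard bound on $\cro(K_k)$; this sidesteps the only nontrivial lemma in the paper's proof. The price is negligible: your union bound runs over $\binom{k}{2}=\Theta(n^{2/3+\varepsilon})$ paths instead of $O(n)$ of them, and in both cases the exponent $1/3-\varepsilon$ in the failure probability, and the hypothesis $\varepsilon<1/3$, arise from the same tension between the vertex budget for the paths and the need for the fragile part to dominate $\cro(A_n)=O(n^{1+3\varepsilon})$. (As you note yourself, the constant $C$ need not be large: $\delta n^{1/3-\varepsilon}\gg\log n$ automatically for fixed $\delta$ and $\varepsilon<1/3$.)
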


\begin{proof}
As in Proposition~\ref{prop_1}, the idea is to build the graph $G=G_n$ from two disjoint graphs $K$ and $H$, where $K$ is a large graph with low crossing number and $H$ is a small graph with high crossing number. 
In addition, deleting a random constant fraction of edges from $H$ will break all the crossings in $H$ with high probability.

Now we describe the constructions more precisely. Let $\gamma>0$ be a constant such that $3\varepsilon+4\gamma<1$ and $3\varepsilon+5\gamma>1$. Let $K$ be a disjoint union of $\Theta(n^{1-\varepsilon})$ complete graphs, each with
$n^{\varepsilon}$ vertices (we omit the explicit rounding to keep the notation simple). 
We have $m(K)=\Theta(n^{1+\varepsilon})$ and $\cro(K)=\Theta(n^{1+3\varepsilon})$.

The graph $H$ consists of five {\em main\/} vertices $v_1, v_2, \dots, v_5$ and $n^{1-2\gamma}$ internally vertex disjoint paths of length $n^{\gamma}$ connecting each pair $v_i, v_j$. The graph $H$ has $n(H)=\Theta(n^{1-\gamma})$ vertices and $m(H)=\Theta(n^{1-\gamma})$ edges. We claim that $\cro(H)=n^{2-4\gamma}$. The upper bound follows from the fact that the crossing number of $K_5$ is $1$. We take a drawing of $K_5$ with one crossing and replace each edge $e$ by $n^{1-2\gamma}$ paths drawn close to $e$. For the lower bound, take a drawing of $H$ minimizing the number of crossings. Let $p_{i,j}$ be a path with the minimum number of crossings among the paths connecting $v_i$ and $v_j$. By redrawing all the other paths connecting $v_i$ and $v_j$ along $p_{i,j}$ the crossing number of the drawing does not change. The paths $p_{i,j}$ together form a subdivision of $K_5$, therefore at least one pair $p_{i,j}, p_{k,l}$ of the paths crosses. Due to the redrawing, every path connecting $v_i$ and $v_j$ crosses every path connecting $v_k$ and $v_l$, which makes $n^{2-4\gamma}$ crossings. By the choice of $\gamma$, we have $n^{1+3\varepsilon} = o(n^{2-4\gamma})$, therefore $\cro(G)=\Theta(\cro(H))$ and $\cro(K)=o(\cro(G))$. 

Let $G'$ be a random subgraph of $G$ where each edge of $G$ is taken independently with probability
$p=1-\delta$. Let $H'=G'\cap H$. We show that with high probability, $H'$ is a forest, in particular $\cro(H')=0$. This happens if at least one edge is missing from every path connecting two main vertices of $H$. The probability of such an event is at least 
$$1-n\cdot(1-\delta)^{n^\gamma} \ge 1-e^{-\delta n^{\gamma}+\log{n}} \ge 1-e^{-\delta n^{\Omega(1/3-\varepsilon)}}.$$
It follows that with this probability, $\cro(G') \le \cro(K) \le o(\cro(G))$.
\end{proof}

Note that in the previous construction the number $\delta$ does not have to be constant: it is enough to delete a random $\delta=c\log{n}/n^{\gamma}$ fraction of the edges to get the same conclusion with probability almost $1$.

The question whether deleting a small random constant fraction of the edges of a graph $G$ decreases the crossing number only by a small constant fraction remains open for graphs with more than $n^{4/3}$ edges. We do not know the answer even to the following weaker version of the question.

\begin{problem}
Let $\varepsilon \in (0,2/3)$ and $p\in (0,1)$ be constants. Does there exist $c(p)>0$ and $n_0$ such that for every graph $G$ with $n(G)>n_0$ and $m(G) > n(G)^{4/3+\varepsilon}$, a random subgraph $G'$ of $G$ with each edge taken with probability $p$ has crossing number at least $c(p)\cdot \cro(G)$, with probability at least $1/2$? 
\end{problem}

The graphs in Proposition~\ref{prop_4/3} have small number of edges responsible for almost all the crossings. Is this the only way how to force a random subgraph of $G$ to have crossing number $o(\cro(G))$?  

\begin{problem}
Let $\varepsilon > 0$. Does there exist $n_0$ and $\delta$ 
such that every graph $G$ with $n(G)\ge n_0$ and $m(G)\ge n(G)^{1+\varepsilon}$ has a subset $F$ of $o(m(G))$ edges such that 
every subgraph $G'$ of $G$ with $m(G')\ge (1-\delta)m(G)$ and $E(G')\subset E(G)\setminus F$ has $\cro(G')\ge
(1-\varepsilon)\cro(G)$?
\end{problem}

\end{document}